\newcommand{\eeq}{\end{equation}}
\newtheorem{thm}{Theorem}[section]
\newtheorem{prop}[thm]{Proposition}
\newtheorem{lem}[thm]{Lemma}
\newtheorem{cor}[thm]{Corollary}
\def\slfrac#1#2{\hbox{\kern.1em %
 \raise.5ex\hbox{\the\scriptfont0 #1}\kern-.11em %
 /\kern-.15em\lower.25ex\hbox{\the\scriptfont0 #2}}}
\title[Differential polynomial rings over a PI ring]{Differential polynomial rings over rings satisfying a polynomial identity}
\subjclass[2010]{Primary: 16N20. ~Secondary: 16S36, 16W25}
\author{Jason P. Bell}
\thanks{The authors thank NSERC for its generous support.}
\address{University of Waterloo \\
Department of Pure Mathematics \\
Waterloo, Ontario \\
Canada  N2L 3G1\\}
\email{jpbell@uwaterloo.ca}
\author{Blake W. Madill}
\email{bmadill@uwaterloo.ca}
\author{Forte Shinko}
\email{fshinko@uwaterloo.ca}
\keywords{skew polynomial rings, polynomial identities, derivations, locally nilpotent rings, nil rings, Jacboson radicals.}
\begin{document}
 
\begin{abstract} Let $R$ be a ring satisfying a polynomial identity and let $\delta$ be a derivation of $R$.  We show that if $N$ is the nil radical of $R$ then $\delta(N)\subseteq N$ and the Jacobson radical of $R[x;\delta]$ is equal to $N[x;\delta]$.  As a consequence, we have that if $R$ is locally nilpotent then $R[x;\delta]$ is locally nilpotent.  This affirmatively answers a question of Smoktunowicz and Ziembowski.  
 \end{abstract}
 
\keywords{Differential polynomial ring, PI ring, Jacboson radical}

\maketitle

\section{Introduction}
Let $R$ be a ring (not necessarily unital) and let $\delta$ be a derivation of $R$.  We recall that the differential polynomial ring $R[x;\delta]$ is, as a set, given by all polynomials of the form
$a_n x^n+\cdots +a_1 x +a_0$ with $n\ge 0$, $a_0,\ldots ,a_n\in R$.  Multiplication is given by $xa=ax+\delta(a)$ for $a\in R$ and extending using associativity and linearity.

There has been a lot of interest in studying the Jacobson radical of the ring $R[x;\delta]$ \cite{Am, BG, FKM, Jor, SZ, TWC}.  In the case when $\delta$ is the zero derivation---that is when $R[x;\delta]=R[x]$---Amitsur \cite{Am} showed that the Jacobson radical, $J(R[x])$, of $R[x]$ is precisely $N[x]$ where $N$ is a nil ideal of $R$ given by $N=J(R[x])\cap R$.   At the opposite end of the spectrum, Ferrero, Kishimoto, and Motose \cite{FKM} showed that when $R$ is commutative then $J(R[x;\delta])\cap R$ is a nil ideal and $J(R[x;\delta])=(J(R[x;\delta])\cap R)[x;\delta]$.   It is still unknown whether $J(R[x;\delta])$ is equal to $(0)$ when $R$ has no nonzero nil ideals.  

A surprising recent development comes from the work of Smoktunowicz and Ziembowski \cite{SZ}.  We recall that a ring $R$ is locally nilpotent if every finitely generated subring of $R$ is a nilpotent ring.  Smoktunowicz and Ziembowski negatively answered a question of Sheshtakov \cite[Question 1.1]{SZ}, by constructing an example of a locally nilpotent ring $R$ such that $R[x;\delta]$ is not equal to its own Jacobson radical.   In addition to this, they asked \cite[p. 2]{SZ} whether Sheshtakov's question has an affirmative answer if one assumes, in addition, that $R$ satisfies a polynomial identity (PI ring, for short).  In this paper we show that this is indeed the case.

Our main result is the following theorem.
\begin{thm}
\label{NilRing}
Let $R$ be a locally nilpotent ring satisfying a polynomial identity and
let $\delta$ be a derivation of $R$.  
Then $R[x;\delta]$ is locally nilpotent.
\end{thm}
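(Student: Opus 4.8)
The plan is to prove the statement directly, by showing every finitely generated subring of $R[x;\delta]$ is nilpotent, and to reduce this to a statement living entirely inside $R$. A finitely generated subring $S$ of $R[x;\delta]$ is contained in the subring generated by the finitely many monomials occurring in a finite generating set, so first I would assume $S = \langle a_1 x^{m_1},\ldots,a_k x^{m_k}\rangle$ with $a_i\in R$. The commutation rule gives $x^m a = \sum_{j=0}^{m}\tbinom{m}{j}\delta^j(a)x^{m-j}$, so when one expands a product $a_{i_1}x^{m_{i_1}}\cdots a_{i_c}x^{m_{i_c}}$ of $c$ generators, every coefficient that appears is a product of exactly $c$ elements drawn from $G=\{\delta^j(a_i): j\ge 0,\ 1\le i\le k\}$ (the leftmost factor is undifferentiated, each later factor is some $\delta^{p}(a_{i_\ell})$). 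Hence if the subring $T\subseteq R$ generated by $G$ satisfies $T^c=0$, then $S^c=0$. Since $G$ is closed under $\delta$, the ring $T$ is a $\delta$-invariant subring of $R$ that is finitely generated as a differential ring, and Theorem~\ref{NilRing} is reduced to showing: the $\delta$-invariant subring of a locally nilpotent PI ring generated by finitely many elements is nilpotent.

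Two features are then immediate. Because $R$ is locally nilpotent it is nil, so $T$ is a nil PI ring; since a nil ring satisfying a polynomial identity is locally nilpotent, every finitely generated subring of $T$ is nilpotent, and by Shirshov's height theorem a finitely generated nil PI ring is in fact nilpotent. The nil-radical invariance $\delta(N)\subseteq N$ established earlier guarantees that the relevant radicals are respected by $\delta$. The gap I must close is that $T$ is generated as an \emph{ordinary} ring only by the infinite set $G$: local nilpotence of $T$ yields, for each $L$, that the finitely generated subring $T_L=\langle \delta^j(a_i): j\le L\rangle$ is nilpotent of some index $e(L)$, but nothing so far bounds $e(L)$ as $L\to\infty$.

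The heart of the argument, and the step where the polynomial identity is essential, is to produce a single $c$ with $T^c=0$, i.e.\ to bound $e(L)$ independently of $L$. This is exactly where the hypotheses cannot be dropped: writing $\delta=\operatorname{ad}(x)$ inside $R[x;\delta]$ shows that without a PI the ring can behave like a Weyl algebra, and the example of Smoktunowicz and Ziembowski shows local nilpotence of $R$ alone does not suffice. The plan is to exploit the identity of degree $d$ through a Shirshov-type analysis in which the derivatives $\delta^j(a_i)$ are organized into words, aiming to show that modulo a nilpotent ideal controlled by $d$ only boundedly many derivative layers contribute new \emph{height}, so that the height of $T$ over a finite set of words is bounded by a function of $d$ and $k$ alone; combining such a uniform height bound with the nilpotence of each word would force $T^c=0$ for an explicit $c=c(d,k)$. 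In parallel I would try to show the iterates are eventually redundant -- that $\delta^{L+1}(a_i)\in T_L$ for some finite $L$ -- which would make $T$ finitely generated as an ordinary ring and let me quote the nilpotence of finitely generated nil PI rings directly.

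The main obstacle is precisely this passage from locally nilpotent to nilpotent for $T$. The differential subring need not be finitely generated as an ordinary ring, so the classical nilpotency theorems for finitely generated nil PI rings do not apply off the shelf; nor can one appeal to a bounded nilpotency index, since a locally nilpotent PI ring such as $\bigoplus_n t\,k[t]/(t^n)$ can have elements of unbounded index. I therefore expect essentially all of the real work to lie in using the relations forced by $\delta$ being a derivation, together with the polynomial identity, to convert the unbounded local data $\{e(L)\}$ into a single uniform nilpotency bound for $T$.
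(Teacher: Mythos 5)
Your reduction of the problem to coefficients in $R$ is sound and matches the paper's first step: after replacing the generating set by $S_0=T\cup Tx\cup\cdots\cup Tx^k$, every coefficient of a product of $c$ generators is a $\mathbb{Z}$-linear combination of products $a_{i_0}\delta^{j_1}(a_{i_1})\cdots \delta^{j_{c-1}}(a_{i_{c-1}})$. But from there the proposal has two genuine gaps. First, and decisively, the heart of the argument is never carried out: you say you would ``exploit the identity of degree $d$ through a Shirshov-type analysis'' and that you ``expect essentially all of the real work'' to lie in converting the local data into a uniform bound, but no such argument is given. What the paper actually does at this point is (i) prove a combinatorial lemma (Proposition \ref{CoW} and Corollary \ref{CoW2}) producing, inside any sufficiently long word of derivative-exponents that is both $k$-valid and ${\bf b}$-bounded, a factorization $uw_1\cdots w_dv$ with $w_1\succ\cdots\succ w_d$; and (ii) run a minimal-counterexample argument in which the multilinear identity of degree $d$ is applied to the blocks $f(w_1),\ldots,f(w_d)$ to rewrite the product as a combination of products indexed by lexicographically smaller words, each of which vanishes either by minimality or because its word fails to be ${\bf b}$-bounded. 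None of this is present in your proposal.

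Second, your reduction discards information that the argument cannot do without. You reduce to the claim that the full differential subring $T=\langle \delta^j(a_i)\colon j\ge 0\rangle$ satisfies $T^c=0$ for a single $c$. But the coefficients that actually arise in expanding a product of $c$ monomials are constrained: the commutation rule forces $j_1+\cdots+j_i\le p_1+\cdots+p_i\le ki$ for each $i$, which is exactly the $k$-validity condition of Lemma \ref{DerLem}. This constraint is essential, not cosmetic: the combinatorial lemma fails for arbitrary ${\bf b}$-bounded words (a word with strictly increasing letters such as $j_i=i$ contains no $2$-decreasing subword, since an earlier block begins with a smaller letter and hence is $\prec$ any later block), so without $k$-validity one cannot locate the decreasing blocks to which the polynomial identity is applied. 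The paper never proves, and does not need, that $T$ itself is nilpotent; by aiming at that stronger statement you have set yourself a target that may well be false and is in any case not what the expansion requires. (A smaller point: you invoke ``the nil-radical invariance $\delta(N)\subseteq N$ established earlier,'' but in the paper that is part of Theorem \ref{cor: J}, which is deduced \emph{from} Theorem \ref{NilRing} and requires characteristic zero, so it is not available here.)
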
 
In particular, this result shows that $R[x;\delta]$ is equal to its own Jacobson radical under the hypotheses from the statement of Theorem \ref{NilRing}.  This gives an affirmative answer to a question of Smoktunowicz and Ziembowski \cite[p. 2]{SZ}.  We note that the analogue of Theorem \ref{NilRing} need not hold if we form a skew polynomial extension of $R$ using an automorphism $\sigma$ instead of a derivation $\delta$.  For example, consider the ring $R=M/M^2$ where $M$ is the maximal ideal $(t_n\colon n\in \mathbb{Z})$ of $\mathbb{C}[t_n\colon n\in \mathbb{Z}]$ and let $\sigma$ be the automorphism of $R$ given by $\sigma(t_i+M^2)=t_{i+1}+M^2$.  Then $R$ is commutative and $R^2=0$ but $t_0x\in R[x;\sigma]$ is not nilpotent.

 As a corollary of Theorem \ref{NilRing}, we obtain---in the characteristic zero case---a result that can be thought of as an extension of a result of Ferrero, Kishimoto, and Motose \cite{FKM} to polynomial identity rings.  (Our result does not hold in the positive characteristic case, however; we give examples which show this.)  We recall that in a ring satisfying a polynomial identity, we have a two-sided nil ideal called the \emph{nil radical}.  This ideal is the sum of all right nil ideals \cite[Proposition 1.6.9 and Corollary 1.6.18]{Ro} and is locally nilpotent \cite{K}.  In general, it is unknown whether the sum of left nil ideals is again nil---this is the famous K\"othe conjecture.  
\begin{thm}
\label{cor: J}
Let $R$ be a unital polynomial identity algebra over a field of characteristic zero and let $\delta$ be a derivation of $R$.  Then if $N$ is the nil radical of $R$ then $\delta(N)\subseteq N$ and $J(R[x;\delta])=N[x;\delta]$.  In particular, $J(R[x;\delta])=(J(R[x;\delta])\cap R)[x;\delta]$.  
\end{thm}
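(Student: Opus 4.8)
The plan is to establish two facts: that $N$ is a differential ideal ($\delta(N)\subseteq N$), and then the two inclusions comprising $J(R[x;\delta])=N[x;\delta]$, reducing the harder inclusion to the semiprime case. Throughout, write $\bar R=R/N$, which is a semiprime \textup{PI} algebra (it has no nonzero nil ideals, since $N$ is the largest one).

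First I would prove $\delta(N)\subseteq N$, and this is where characteristic zero is used decisively. Fix $a\in N$ with $a^n=0$. Expanding $\delta^n(a^n)=0$ by the higher Leibniz rule $\delta^n(a^n)=\sum_{i_1+\cdots+i_n=n}\binom{n}{i_1,\dots,i_n}\delta^{i_1}(a)\cdots\delta^{i_n}(a)$, every summand other than the one with all $i_j=1$ contains a factor $\delta^0(a)=a\in N$, hence lies in the ideal $N$. Reducing modulo $N$ and using that $n!$ is invertible in the base field, I obtain
\[
0=\overline{\delta^{n}(a^{n})}=n!\,\overline{\delta(a)}^{\,n},
\]
so $\overline{\delta(a)}$ is nilpotent in $\bar R$. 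Applying this to $ra\in N$ for an arbitrary $r\in R$ and using $\bar a=0$ gives $\bar r\,\overline{\delta(a)}=\overline{\delta(ra)}$ nilpotent for every $r$, so $\bar R\,\overline{\delta(a)}$ is a nil left ideal of the semiprime \textup{PI} ring $\bar R$. Since a semiprime \textup{PI} ring has no nonzero nil one-sided ideals, $\overline{\delta(a)}=0$, i.e. $\delta(a)\in N$, as desired.

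Next, for the inclusion $N[x;\delta]\subseteq J(R[x;\delta])$, I would invoke Theorem \ref{NilRing}. The nil radical $N$ is locally nilpotent and satisfies a polynomial identity, and by the previous paragraph $\delta$ restricts to a derivation of $N$; hence $N[x;\delta]$ is locally nilpotent, and in particular nil. Because $N\triangleleft R$ and $\delta(N)\subseteq N$, the set $N[x;\delta]$ is a two-sided ideal of $R[x;\delta]$, and being nil it lies in the Jacobson radical. For the reverse inclusion I would pass to the quotient: $\delta$ induces a derivation $\bar\delta$ on $\bar R$ with $R[x;\delta]/N[x;\delta]\cong \bar R[x;\bar\delta]$, and since $N[x;\delta]\subseteq J(R[x;\delta])$ the radical correspondence gives $J(R[x;\delta])/N[x;\delta]=J(\bar R[x;\bar\delta])$. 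Thus the whole equality $J(R[x;\delta])=N[x;\delta]$ reduces to the assertion that $J(S[x;\bar\delta])=0$ whenever $S$ is a semiprime \textup{PI} algebra over a field of characteristic zero.

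This last assertion is the main obstacle. I would approach it by first showing that $J(S[x;\bar\delta])\cap S$ is a $\bar\delta$-stable nil ideal of $S$: stability is immediate since $\bar\delta(a)=xa-ax\in J(S[x;\bar\delta])\cap S$ for $a\in J(S[x;\bar\delta])\cap S$, and nilness forces it to be $0$ by semiprimeness; it then remains to show $J(S[x;\bar\delta])=(J(S[x;\bar\delta])\cap S)[x;\bar\delta]$, whence $J(S[x;\bar\delta])=0$. Proving both the nilness of the contraction and the coefficient-wise structure of the radical in the noncommutative \textup{PI} setting is the genuinely hard step; here I expect to exploit the structure theory of semiprime \textup{PI} rings, using Posner's theorem to pass each prime image to a central simple algebra, extending $\bar\delta$ to the ring of quotients, and analyzing the radical of the resulting Ore extension (via a Kharchenko-style inner/outer dichotomy for the derivation, or an Amitsur-type homogeneity argument in the spirit of Ferrero--Kishimoto--Motose), with characteristic zero again essential. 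Once $J(R[x;\delta])=N[x;\delta]$ is established, intersecting with $R$ gives $J(R[x;\delta])\cap R=N$, which yields the final ``in particular'' clause.
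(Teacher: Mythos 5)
Your treatment of $\delta(N)\subseteq N$ and of the inclusion $N[x;\delta]\subseteq J(R[x;\delta])$ matches the paper's proof in all essentials: the same Leibniz expansion of $\delta^n(a^n)=0$, the same observation that every multi-index summing to $n$ other than $(1,\ldots,1)$ has a zero entry and hence contributes an element of the ideal $N$, the same use of characteristic zero to make $n!$ invertible, and the same appeal to Theorem \ref{NilRing} to conclude that $N[x;\delta]$ is a locally nilpotent ideal of $R[x;\delta]$ and therefore lies in the Jacobson radical. Your way of extracting $\overline{\delta(a)}=0$ --- passing to $ra$ to see that $\bar R\,\overline{\delta(a)}$ is a nil left ideal of the semiprime PI ring $\bar R$ --- is a mild and equally valid variant of the paper's contradiction argument, which instead uses that the nil radical absorbs every nil one-sided ideal. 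Your reduction of the reverse inclusion to the vanishing of $J(S[x;\bar{\delta}])$ for $S$ a semiprime PI algebra is also exactly the reduction the paper makes.

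The gap is the semiprime case itself. You correctly flag it as ``the main obstacle,'' but you do not prove it: you only sketch a program (contract the radical to $S$, show the contraction is a $\bar{\delta}$-stable nil ideal and that the radical is determined coefficient-wise, then invoke Posner's theorem, a Kharchenko-style dichotomy, or an Amitsur-type homogeneity argument) and say you ``expect'' it to succeed. As written this leaves the theorem unproved. The paper closes this step with a single citation: the theorem of Tsai, Wu, and Chuang \cite{TWC} states precisely that $J(S[x;\delta])=0$ whenever $S$ is a PI ring with zero nil radical, so no new argument is required there. The missing ingredient is therefore not a flaw in your architecture but a known result that you would need either to locate in the literature or to reprove in full; the program you outline is roughly the shape of the Ferrero--Kishimoto--Motose and Tsai--Wu--Chuang arguments, but carrying it out is genuinely nontrivial and cannot be waved through.
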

We give examples that show that the containment $\delta(N)\subseteq N$ need not hold if the characteristic zero hypothesis is dropped; in particular, the equality $J(R[x;\delta])=N[x;\delta]$ need not hold without this hypothesis.

The outline of this paper is as follows.  In Section 2, we give some results from combinatorics on words.  In Section 3, we use these combinatorial results to prove Theorems \ref{NilRing} and \ref{cor: J}.  We note that throughout this paper, we have opted to work with rings that are not necessarily unital---the reason for this is that the question of Smoktunowicz and Ziembowski was asked for such rings.  On the other hand, we occasionally refer to results that sometimes implicitly assume that the involved rings are unital.  In practice, this does not create any issues: to a non-unital ring $R$, one can create an overring $S$ of $R$ with identity in which $R$ sits as a two-sided ideal and has the property that $S/R$ is a homomorphic image of $\mathbb{Z}$ (possibly $\mathbb{Z}$, itself).  By working with the ring $S$, one can generally apply any results stated for unital rings to $S$ and then show they are inherited by $R$.  Since this is generally straightforward, we make no mention of this other than here.
\section{Combinatorics on words}
In this section, we give some results on combinatorics on words that will be useful to us.  We begin by recalling some of the basic notions we will use.

For any set $A$, let $A^+$ denote the free semigroup on $A$.
We will refer to the elements of $A^+$ as \emph{words}.  \ 
For any $u\in A^+$, we let $u_i\in A$ denote the $i$-th letter of $u$.
A \emph{subword} of $u$ is a contiguous string of letters of $u$, possibly empty.  We say that a subword $v$ of $u$ is a \emph{prefix} if $u=vw$ for some word $w$, possibly empty; we say that $v$ is a \emph{suffix} if $u=wv$ for some, possibly empty, subword $w$ of $u$.  We will be interested in the case when $A=\mathbb{N}:=\{0,1,\ldots \}$.  

Let $S_n$ be the symmetric group on $n$ letters.
Define ${\rm weight}\colon\mathbb{N}^+\to \mathbb{N}$ as follows.  If $u\in \mathbb{N}^+$ is a word of length $n$ then we define
\[
  {\rm weight}(u)
  := \min\left\{\sum_{i=1}^n (n+1 - i) u_{\sigma(i)}\mid \sigma\in S_n\right\}.
\]
For a natural number $k$ and a word $u\in \mathbb{N}^+$ of length $n$, we say that $u$ is $k$-\emph{valid} if ${\rm weight}(u) \le k{n+1\choose 2}$.  Roughly speaking, this says that the average letter of $u$ is not too large compared to $k$.  Let ${\bf b}=(b_0,b_1,b_2,\ldots )$ be a sequence of natural numbers.
We say that $u\in\mathbb{N}^+$ is ${\bf b}$-bounded if for each $m\in\mathbb{N}$,
every subword of length $b_m$ contains at least one letter greater than $m$.

Let $(B,<)$ be a poset.
We place a partial order $\prec$ on $B^+$ as follows.
Let $u,v\in B^+$.
Then $u$ and $v$ are incomparable if one is a prefix of the other.
Otherwise, we compare them lexicographically using the order from $B$.  We will be most interested in this when $B$ is the natural numbers and we will make use of this induced order on $\mathbb{N}^+$.

We say that a finite sequence of words $\{v_i\}_{i=1}^d\subseteq B^+$ is $d$-\emph{decreasing}
if $$v_1 \succ v_2 \succ \cdots \succ v_d.$$
We say that a word $u\in B^+$ has a $d$-decreasing subword
if we can express $u = v w_1 w_2\cdots w_d x$ where $\{w_i\}_{i=1}^d$ is a $d$-decreasing subsequence.  We observe that every word trivially contains a $0$-decreasing subsequence.

\begin{prop}
\label{CoW}
Let ${\bf b}=(b_0,b_1,\ldots )$ be a sequence of natural numbers, let $d$ and $k$ be positive integers, and let $\varepsilon\in (0,1]$.
Then there exist natural number constants $M=M(d,{\bf b},k,\varepsilon)$ and $N=N(d,{\bf b},k,\varepsilon)$
such that if $u\in\mathbb{N}^+$ is a $k$-valid, ${\bf b}$-bounded word of length $n \ge N$,
then the subword of $u$ consisting of the last $\lfloor{\varepsilon n}\rfloor$ letters contains a $d$-decreasing subsequence $\{w_i\}_{i=1}^d$ where the first letter of $w_i$ is less than $M$ for $i=1,\ldots ,d$.
\end{prop}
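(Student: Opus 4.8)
The plan is to extract two complementary features from the hypotheses and then fuse them via a common-prefix/Erd\H{o}s--Szekeres argument. First I would record what each hypothesis buys, writing $u[a,b)$ for the subword occupying positions $a,\dots,b-1$. From $k$-validity: sorting the letters increasingly as $a_1\le\cdots\le a_n$ gives ${\rm weight}(u)=\sum_{i=1}^n(n+1-i)a_i$, and if $t$ letters are $\ge L$ they contribute at least $L\cdot\frac{t(t+1)}{2}$, so $L\cdot\frac{t(t+1)}{2}\le k\cdot\frac{n(n+1)}{2}$ and $t\le (n+1)\sqrt{k/L}$. Thus for any threshold $M$ at most $(n+1)\sqrt{k/M}$ letters are $\ge M$; taking $M$ of order $k/\varepsilon^2$ forces at least half of the last $\lfloor\varepsilon n\rfloor$ letters to be $<M$ (``small''), and this transfers to the suffix since its large letters form a subset. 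From $\mathbf b$-boundedness at scale $m=M-1$: every window of length $b_{M-1}$ contains a letter $\ge M$, so maximal runs of small letters have length $<b_{M-1}$; the same statement at \emph{every} scale $m$ forces letters exceeding $m$ to recur throughout the word, and this recurrence will be the engine producing descents.

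Next I would reduce the goal to a clean target: a small string $\alpha$ (with first letter $<M$) occurring at positions $p_1<\cdots<p_d$, i.e.\ $u[p_i,p_i+|\alpha|)=\alpha$, whose continuations $u_{p_i+|\alpha|}$ are \emph{strictly decreasing}. Given this, choose a final cut $p_{d+1}$, space the $p_i$ so each block exceeds $|\alpha|$ in length, and set $w_i:=u[p_i,p_{i+1})$. Every $w_i$ begins with $\alpha$, hence with a letter $<M$; moreover $w_i$ and $w_{i+1}$ agree on their first $|\alpha|$ letters and differ at position $|\alpha|+1$ with $w_i$ the larger, so neither is a prefix of the other and $w_i\succ w_{i+1}$. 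This is exactly the desired $d$-decreasing subword with small starts. Two degenerate cases already show the two regimes at play: if the small letters themselves contain a length-$d$ strictly decreasing subsequence one may take $\alpha$ empty (decreasing first letters), whereas if the small letters are ``flat'' one is forced to use a long $\alpha$ and let the decreasing continuations be \emph{large} letters.

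To manufacture $\alpha$ I would run a dichotomy. Begin with the occurrences of a single popular small value $s$ (pigeonhole gives some $s<M$ occurring $\ge \frac{\varepsilon n}{4M}$ times). At each stage we hold a small prefix $\alpha$ occurring on a large set $P_\alpha$ of positions, and examine the capped continuations $\min(u_{p+|\alpha|},M)\in\{0,\dots,M\}$. By Erd\H{o}s--Szekeres together with the dual of Dilworth's theorem, either these contain a strictly decreasing subsequence of length $d$---which lifts to a genuine strictly decreasing continuation, since a strictly decreasing sequence in $\{0,\dots,M\}$ uses the top value at most once---and we are done, or they split into at most $d-1$ weakly increasing subsequences, whence some value is popular and may be appended to $\alpha$, shrinking $P_\alpha$ by a controlled factor. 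Because $\alpha$ stays in the small alphabet, $\mathbf b$-boundedness caps $|\alpha|<b_{M-1}$: by that length the continuations are forced to be large, and there the scale-by-scale recurrence of large letters (no long global increase is possible under the sparsity bound) furnishes the strictly decreasing subsequence of continuations.

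The hard part will be the bookkeeping that makes this iteration close. Each prefix extension can cost a factor of order $M$ in $|P_\alpha|$, the number of extensions is bounded only by $b_{M-1}$, and one must keep $P_\alpha$ populous enough to overrun the sparsity bound $(n+1)\sqrt{k/M}$ on large letters at the terminal stage. The subtle point is that these losses are in tension with the gain: diverse small letters inflate the per-step loss but also trigger an early win through the decreasing-subsequence horn, while flat small letters make each step nearly lossless but drive the iteration deep. Balancing this---running the Erd\H{o}s--Szekeres extraction against the per-scale frequency guaranteed by $\mathbf b$-boundedness and absorbing the losses into sufficiently large $M=M(d,\mathbf b,k,\varepsilon)$ and $N=N(d,\mathbf b,k,\varepsilon)$---is where the real work lies; the resulting constants will be enormous, but only their finiteness is asserted.
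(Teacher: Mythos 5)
Your reduction to ``common prefix $\alpha$ plus strictly decreasing continuations'' is sound, and your extraction of the sparsity bound from $k$-validity (at most $(n+1)\sqrt{k/L}$ letters are $\ge L$) is correct. The genuine gap is the terminal case of your iteration, and it is not mere bookkeeping: the branch where the popular capped continuation value equals the cap $M$ cannot be discharged. At that point you hold a set of positions, of size at most $n/(M+1)$ after the pigeonhole, whose continuations are all $\ge M$; but $k$-validity permits $(n+1)\sqrt{k/M}$ letters $\ge M$, and $(n+1)\sqrt{k/M}\ge n/(M+1)$ for every $M,k\ge 1$, so the sparsity bound yields no contradiction at any stage, not even the first. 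Worse, those continuations can all be \emph{exactly} equal to $M$ (a constant block of $\lfloor (n+1)\sqrt{k/M}\rfloor$ copies of one large value is consistent with $k$-validity and, with larger letters interleaved elsewhere, with ${\bf b}$-boundedness), in which case they contain no strictly decreasing subsequence of length $2$: you can neither win nor append a letter $<M$ to $\alpha$. Your proposal to absorb this by enlarging $M$ goes the wrong way: the compounded loss $((d-1)(M+1))^{b_{M-1}}$ over up to $b_{M-1}$ extension steps grows far faster in $M$ than the $\sqrt{M}$ gain from sparsity, so the inequality you would need, roughly $\varepsilon\sqrt{M}>2\sqrt{k}\,((d-1)(M+1))^{b_{M-1}}$, fails for every admissible $M$.

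The repair is to let the threshold escalate, which is exactly what the paper's proof does. It inducts on $d$, halving $\varepsilon$ at each level: given a $d$-decreasing sequence with first letters $<M_1$ inside the last $\lfloor\varepsilon n/2\rfloor$ letters, it locates a \emph{single} letter $a$ with $M_1\le a<M_2$ in the preceding window of length about $\varepsilon n/2$ and prepends the subword beginning at $a$, which then dominates the whole sequence lexicographically at its first letter. The counting there compares $M_2{j+1\choose 2}$ with $j\approx \varepsilon n/(2b_{M_1})$ against $k{n+1\choose 2}$ --- a comparison quadratic in the count of candidate letters, won by taking $M_2>8kb_{M_1}^2\varepsilon^{-2}$ --- because only one medium-band letter is needed per level; your argument instead needs a linear count of large letters to overrun the sparsity bound, which it never can. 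To salvage your scheme you would have to let $\alpha$ acquire letters at an increasing sequence of scales $M<M'<M''<\cdots$, with the length cap moving from $b_{M-1}$ to $b_{M'}$ and beyond, and prove that this terminates after boundedly many scale changes; doing so essentially reconstructs the paper's induction on $d$.
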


\begin{proof}
We proceed by induction on $d$.
The case when $d = 0$ is vacuous and we may take $M=M(0,b,k,\varepsilon)=N(0,b,k,\varepsilon)=1$.

Suppose now that the proposition is true for all nonnegative integers $\le d$.
We take 
\begin{equation}
M_1 = M\left(d,{\bf b},k,\varepsilon/2\right)\qquad {\rm and}\qquad
N_1 = N\left(d,{\bf b},k,\varepsilon/2\right).
\end{equation}

We pick a positive integer $M_2$ satisfying:
\begin{enumerate}
\item[(i)] $M_2 > M_1$;
\item[(ii)] $M_2 > 8b_{M_1}^2 k\varepsilon^{-2}$.
\end{enumerate}
We have
$$
  M_2 {(\varepsilon n/2  -1)/b_{M_1}\choose 2}
  \sim M_2\varepsilon^2 b_{M_1}^{-2}n^2/8 \ge kn^2.$$
  It follows that there is a natural number $N_2>N_1$ such that whenever $n>N_2$ we have
\begin{equation}
\label{eq: MM}
 M_2 {(\varepsilon n/2  -1)/b_{M_1} \choose 2} > k{n+1\choose 2}.
\end{equation}

Let $u\in \mathbb{N}^+$ be a $k$-valid, ${\bf b}$-bounded word of length $n\ge N_2$.  We write 
$u = vwx$,
where $wx$ is of length $\lfloor{\varepsilon n}\rfloor$ and $x$ is of length $\lfloor{\varepsilon n/2}\rfloor$.
We decompose $w$ into subwords of length $b_{M_1}$ as follows.
We write $w = y_1\cdots y_j y_{j+1}$ where each of $y_1,\ldots ,y_j$ has length $b_{M_1}$
and $y_{j+1}$ has length less than $b_{M_1}$ (possibly zero).
By construction,
\begin{equation}
j = \left\lfloor{\frac{\lfloor{\varepsilon n}\rfloor - \lfloor{\varepsilon n/2} \rfloor}{b_{M_1}}}\right\rfloor.
\end{equation}
Since $y_i$ has length $b_{M_1}$ for $i\in \{1,\ldots ,j\}$, 
it must contain a letter $a_i$ with $a_i\ge M_1$.

We claim that there exists some $i\in \{1,\ldots ,j\}$ such that $a_i< M_2$.
To see this, suppose that this is not the case.  Then $u$ contains at least $j$ letters that are each at least $M_2$.
Since $u$ contains $j$ letters that are at least $M_2$, we have
$${\rm weight}(u)\ge j M_2 + (j-1) M_2 + \cdots +  M_2 = M_2{j+1\choose 2}\ge M_2 {(\varepsilon n/2  -1)/b_{M_1} \choose 2}.$$
But Equation (\ref{eq: MM}) gives that this contradicts the fact that $u$ is a $k$-valid word.  

We conclude that $w$ must contain a letter $a$ with $M_1 \le a < M_2$.
We write $w = bc$ where $c$ is a word whose first letter is $a$.

By the inductive hypothesis, we can write $x = p v_1 \cdots v_d q$
where $v_1\succ \cdots \succ v_d$ and the first letter of  $v_i$ is strictly less than $M_1$ for $i\in \{1,\ldots ,d\}$.
We then have that $wx = bcpv_1\cdots v_d q$, and by construction
$$cp \succ v_1 \succ \cdots \succ v_d$$ is a $(d+1)$-decreasing subsequence
where the first letter of each word in the sequence is less than $M_2$.  
The result now follows taking
$$N(d+1,{\bf b},k,\varepsilon) = N_2 \qquad {\rm and}\qquad M(d+1,{\bf b},k,\varepsilon) = M_2.$$
\end{proof}

\begin{cor}
\label{CoW2}
Let ${\bf b}=(b_0,b_1,\ldots )$ be a sequence of natural numbers and let $d$ and $k$ be positive integers.
Then there exists a natural number $N=N(d,b,k)$
such that if $u\in\mathbb{N}^+$ is a $k$-valid, ${\bf b}$-bounded word of length $n \ge N$,
then $u$ contains a $d$-decreasing subword.\end{cor}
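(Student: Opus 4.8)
The plan is to derive this as an immediate specialization of Proposition \ref{CoW}. The corollary asks only for the existence of a $d$-decreasing subword somewhere in $u$, whereas the proposition delivers strictly more: a $d$-decreasing subsequence located entirely within the final $\lfloor \varepsilon n\rfloor$ letters of $u$, with the additional feature that each constituent word begins with a letter below a fixed bound $M$. Thus the natural approach is to invoke the proposition for a single convenient value of $\varepsilon$ and then simply discard the extra information that we do not need.

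Concretely, I would apply Proposition \ref{CoW} with $\varepsilon = 1$, obtaining constants $M = M(d,\mathbf{b},k,1)$ and $N = N(d,\mathbf{b},k,1)$ with the stated property, and then set $N(d,b,k) := N(d,\mathbf{b},k,1)$. Suppose $u$ is a $k$-valid, $\mathbf{b}$-bounded word of length $n \ge N$. Since $\lfloor \varepsilon n\rfloor = n$ when $\varepsilon = 1$, the ``last $\lfloor \varepsilon n\rfloor$ letters'' of $u$ constitute all of $u$. The proposition therefore furnishes a $d$-decreasing subsequence $\{w_i\}_{i=1}^d$ inside $u$, that is, a decomposition $u = v w_1 \cdots w_d x$ with $w_1 \succ \cdots \succ w_d$. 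By the definition of a $d$-decreasing subword, this is exactly what is required; the bound on the first letters of the $w_i$ plays no role and is discarded.

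There is essentially no obstacle here, as the corollary is a genuine weakening of the proposition; the only thing to verify is the trivial bookkeeping that the choice $\varepsilon = 1$ makes the prescribed suffix coincide with the entire word. One could just as well fix any $\varepsilon\in(0,1]$ and locate a $d$-decreasing subword within a proper suffix of $u$, which is \emph{a fortiori} a $d$-decreasing subword of $u$; taking $\varepsilon = 1$ is merely the cleanest bookkeeping choice.
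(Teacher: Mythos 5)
Your proposal is correct and is exactly the paper's argument: the paper's proof of Corollary \ref{CoW2} consists of the single line ``take $N(d,b,k)=N(d,{\bf b},k,1)$ from Proposition \ref{CoW},'' i.e.\ specialize to $\varepsilon=1$ and discard the extra information about the location of the subsequence and the bound $M$ on the first letters. Your write-up just spells out the bookkeeping that the paper leaves implicit.
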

\begin{proof}
We take $N(d,b,k) = N(d,b,k,1)$ from Corollary \ref{CoW}.
\end{proof}
\section{Proofs of Theorems \ref{NilRing}  and \ref{cor: J}}

In this section we prove our main results.  We begin with a simple lemma that will allow us to apply our combinatorial results from the preceding section.
\begin{lem}\label{DerLem}
Let $R$ be a ring, let $T=\{a_1,\ldots ,a_m\}$ be a finite subset of $R$, and let $\delta$ be a derivation of $R$.  If $n$ and $k$ are natural numbers and $p_1,\ldots ,p_{n+1}$ are nonnegative integers that are at most $k$ then the product $$a_{i_0}x^{p_1}a_{i_1}x^{p_2}\cdots a_{i_n}x^{p_{n+1}}$$ in $R[x;\delta]$ can be written as a $\mathbb{Z}$-linear combination of of elements of the form $$a_{i_0}\delta^{j_1}(a_{i_1})\delta^{j_2}(a_{i_2})\cdots \delta^{j_{n}}(a_{i_n})x^M,$$ where
$M$ is a nonnegative integer and $j_1j_2\cdots j_n\in  \mathbb{N}^+$ is $k$-valid. 
\end{lem}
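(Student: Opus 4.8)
The plan is to push every power of $x$ to the right-hand end of the product and to keep careful track of the derivatives this produces. The basic tool is the commutation identity
\begin{equation}
\label{eq:comm}
x^p a \;=\; \sum_{j=0}^{p} {p\choose j}\,\delta^j(a)\,x^{p-j},
\end{equation}
valid for every $a\in R$ and every nonnegative integer $p$; this follows by a routine induction on $p$ from the defining relation $xa=ax+\delta(a)$. I would apply \eqref{eq:comm} repeatedly, from left to right, to the product $a_{i_0}x^{p_1}a_{i_1}\cdots a_{i_n}x^{p_{n+1}}$. At the first step one moves $x^{p_1}$ past $a_{i_1}$, replacing $a_{i_0}x^{p_1}a_{i_1}$ by a $\mathbb{Z}$-linear combination of terms $a_{i_0}\delta^{j_1}(a_{i_1})x^{p_1-j_1}$; the surviving power of $x$ then merges with $x^{p_2}$ and the process is repeated past $a_{i_2}$, and so on. After all $n$ steps one obtains a $\mathbb{Z}$-linear combination of terms of exactly the required shape $a_{i_0}\delta^{j_1}(a_{i_1})\cdots\delta^{j_n}(a_{i_n})x^M$, where $M$ is whatever power of $x$ is left over (and terms with $j_\ell=0$ simply leave the factor $a_{i_\ell}$ undifferentiated, which is permitted since $0\in\mathbb{N}$).

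The bookkeeping is the heart of the matter. For a fixed term in this expansion, let $j_1,\ldots,j_n$ be the orders of the derivatives produced, and let $P_\ell$ denote the power of $x$ sitting immediately to the left of $a_{i_\ell}$ at the moment before it is moved past. Then $P_1=p_1$, and since the step past $a_{i_\ell}$ consumes $j_\ell$ powers of $x$ while the next factor contributes $p_{\ell+1}$, we get the recursion $P_{\ell+1}=P_\ell-j_\ell+p_{\ell+1}$, whence
\begin{equation}
\label{eq:Pl}
P_\ell \;=\; (p_1+\cdots+p_\ell)-(j_1+\cdots+j_{\ell-1}).
\end{equation}
Because \eqref{eq:comm} only produces derivatives of order $0\le j_\ell\le P_\ell$, and because each $p_i\le k$, formula \eqref{eq:Pl} gives the partial-sum bound
\begin{equation}
\label{eq:partial}
j_1+\cdots+j_\ell \;\le\; p_1+\cdots+p_\ell \;\le\; \ell k \qquad (\ell=1,\ldots,n).
\end{equation}

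It remains to deduce from \eqref{eq:partial} that the word $u=j_1j_2\cdots j_n$ is $k$-valid, and this is the step I expect to require the most care. Since ${\rm weight}(u)$ is a minimum over all $\sigma\in S_n$, it suffices to bound the single value $\sum_{i=1}^n(n+1-i)j_i$ coming from the identity permutation. Writing $S_\ell=j_1+\cdots+j_\ell$ and summing by parts, one checks the identity $\sum_{i=1}^n(n+1-i)j_i=\sum_{\ell=1}^n S_\ell$, after which \eqref{eq:partial} yields
\begin{equation}
\label{eq:weightbd}
{\rm weight}(u) \;\le\; \sum_{\ell=1}^n S_\ell \;\le\; \sum_{\ell=1}^n \ell k \;=\; k{n+1\choose 2},
\end{equation}
so that $u$ is $k$-valid, completing the proof. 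The only genuinely delicate point is the summation-by-parts identity converting the weighted sum of the $j_i$ into the sum of their partial sums $S_\ell$; once that is in hand, \eqref{eq:weightbd} is immediate and everything else is a direct unwinding of the commutation relation \eqref{eq:comm}.
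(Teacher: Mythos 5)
Your proposal is correct and follows essentially the same route as the paper: the same commutation identity $x^p a=\sum_j\binom{p}{j}\delta^j(a)x^{p-j}$, the same partial-sum bound $j_1+\cdots+j_\ell\le p_1+\cdots+p_\ell\le \ell k$, and the same summation (your summation-by-parts identity is exactly the paper's ``summing over all $i$'' step, since $\sum_{\ell=1}^n S_\ell=\sum_{i=1}^n(n+1-i)j_i$). The only cosmetic difference is that you track the intermediate exponents $P_\ell$ explicitly, which the paper leaves implicit.
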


\begin{proof}
Using the formula
\begin{equation}
x^d a = \sum_{j=0}^d {d\choose j} \delta^j(a) x^{d-j},
\end{equation}
for $d\ge 0$ and $a\in R$,
it is straightforward to see that
$$a_{i_1}x^{p_1}a_{i_2}x^{p_2}\cdots a_{i_n}x^{p_n}$$
can be expressed as a $\mathbb{Z}$-linear combination of elements of the form
$$a_{i_0}\delta^{j_1}(a_{i_1})\cdots \delta^{j_n}(a_{i_n}) x^{p_1+p_2+\cdots +p_n+p_{n+1}-j_1-\cdots -j_n}$$
where we have
$j_i \le p_1+\cdots +p_i-j_1-\cdots -j_{i-1}$ for $i=1,\ldots ,n$.  
In particular, we have
$$j_1+\cdots +j_i \le p_1+\cdots +p_i\le ki$$ for $i=1,\ldots ,n$.  
Summing over all $i$ then gives
$$\sum_{i=1}^n (n+1-i) j_i \le \sum_{i=1}^n ki = k {n+1\choose 2}.$$
Thus the word $j_1j_2\cdots j_n\in \mathbb{N}^+$ is necessarily $k$-valid.  The result follows.
\end{proof}

\begin{proof}[Proof of Theorem \ref{NilRing}]
Let $S=\{p_1(x),\ldots ,p_m(x)\}$ be a finite subset of $R[x;\delta]$.  We wish to show that there is a natural number $N=N(S)$ such that $S^{N+1}=0$; i.e., every product of $N+1$ elements of $S$ is equal to zero.   Then there is a finite subset $T=\{a_1,\ldots ,a_t\}$ of $R$ and a natural number $k$ such that
$S\subseteq T+Tx+\cdots +Tx^k$.   Let $S_0=T\cup Tx\cup \cdots \cup Tx^k$.  Then every element of $S^n$ can be expressed as a sum of elements of the form $S_0^n$ and hence it is sufficient to show that there is a natural number $N$ such that $S_0^{N+1}=0$.
  
To show that $S_0^{N+1}=0$, it is enough to show that 
$$Tx^{p_1}Tx^{p_2}\cdots Tx^{p_{N+1}}=0$$ for every sequence $(p_1,\ldots ,p_{N+1})\in \{0,\ldots ,k\}^{N+1}$.  For each $n\ge 0$, we let
$T_n=T\cup \delta(T)\cup \cdots \cup \delta^n(T)\subseteq R$.  Then since $R$ is locally nilpotent, there exists a natural number $b_n$ such that $T_n^{b_n}=0$.  We let ${\bf b}=(b_0,b_1,b_2,\ldots )$.  

By Lemma \ref{DerLem}, if $(p_1,\ldots ,p_{N+1})\in \{0,\ldots ,k\}^{N+1}$ then we have that 
$$Tx^{p_1}Tx^{p_2}\cdots Tx^{p_{N+1}}$$ can be written as a $\mathbb{Z}$-linear combination of elements of the form
$$a_{i_0}\delta^{j_1}(a_{i_1})\cdots \delta^{j_{N}}(a_{i_{N}}) x^M$$ where $M$ is a nonnegative integer and 
$j_1j_2\cdots j_N$ is a word that is $k$-valid.  Moreover, whenever $j_1j_2\cdots j_N$ is not ${\bf b}$-bounded we trivially have $$a_{i_0}\delta^{j_1}(a_{i_1})\cdots \delta^{j_{N}}(a_{i_{N}}) =0,$$ since it necessarily contains a factor from $T_n^{b_n}$ for some $n\ge 0$.  In particular, it is sufficient to show that there is some natural number $N$ such that all elements of $R$ of the form
$$a_{i_0}\delta^{j_1}(a_{i_1})\cdots \delta^{j_{N}}(a_{i_{N}}),$$ with $j_1j_2j_3\cdots j_N\in \mathbb{N}^+$ a $k$-valid and ${\bf b}$-bounded word, are zero.

Let $d$ be the PI degree of $R$ and let $N=N(d,{\bf b},k)$ be as in the statement of Corollary \ref{CoW2}.   We claim that whenever $j_1j_2j_3\cdots j_N$ a $k$-valid and ${\bf b}$-bounded word we have $a_{i_0}\delta^{j_1}(a_{i_1})\cdots \delta^{j_{N}}(a_{i_{N}})=0$.  To see this, suppose towards a contradiction that this is not the case and let $j_1\cdots j_N$ be the lexicographically smallest (i.e., the smallest word with respect to $\prec$) $k$-valid and ${\bf b}$-bounded word of length $N$ such that 
there exists $(i_0,\ldots ,i_N)\in \{1,\ldots ,m\}^{N+1}$ such that $a_{i_0}\delta^{j_1}(a_{i_1})\cdots \delta^{j_{N}}(a_{i_{N}})$ is nonzero.  

Given a subword $y=j_sj_{s+1}\cdots j_{s+r}$ of $j_1\cdots j_N$, we 
define $$f(y)=\delta^{j_s}(a_{i_s})\cdots \delta^{j_{s+r}}(a_{i_{s+r}})\in R.$$

By Corollary \ref{CoW2}, we can write
$j_1\cdots j_N=uw_1w_2\cdots w_d v$ with $$w_1\succ w_2 \succ \cdots \succ w_d.$$   Furthermore, we have that $R$ satisfies a homogeneous multilinear polynomial identity of degree $d$ \cite[Proposition 13.1.9]{MR}:
\[
  X_1\cdots X_d
  = \sum_{\substack{\sigma\in S_d \\ \sigma\neq {\rm id}}} c_\sigma X_{\sigma(1)}\cdots X_{\sigma(d)}
\]
with $c_\sigma \in\mathbb{Z}$ for each $\sigma\in S_d\setminus \{{\rm id} \}$.
Taking $X_i=f(w_i)$ for $i=1,\ldots, d$ we see that
$$f(w_1)f(w_2)\cdots f(w_d) \ =  \ \sum_{\substack{\sigma\in S_d \\ \sigma\neq{\rm id}}} c_\sigma f(w_{\sigma(1)})\cdots f(w_{\sigma(d)}).$$
Hence 
\begin{equation}
\label{eq: sigma}
a_{i_0}\delta^{j_1}(a_{i_1})\cdots \delta^{j_{N}}(a_{i_{N}}) = \sum_{\substack{\sigma\in S_d \\ \sigma\neq {\rm id}}} c_\sigma a_{i_0}f(u)f(w_{\sigma(1)})\cdots f(w_{\sigma(d)})f(v).
\end{equation}
By construction, for $\sigma\in S_d$ with $\sigma\neq {\rm id}$ we have
$a_{i_0}f(u)f(w_{\sigma(1)})\cdots f(w_{\sigma(d)})f(v)$ is an element of the form
$a_{i_0} \delta^{j_{\tau(1)}}(a_{i_{\tau(1)}})\cdots \delta^{j_{\tau(N)}}(a_{i_{\tau(N)}})$ with 
$\tau\in S_N$ such that
$j_{\tau(1)}\cdots j_{\tau(N)}$ is lexicographically less than $j_1\cdots j_N$.
We note that, by definition, permutations of $k$-valid words are again $k$-valid.  Thus if we also have that if $j_{\tau(1)}\cdots j_{\tau(N)}$ is ${\bf b}$-bounded then we must have $a_{i_0} \delta^{j_{\tau(1)}}(a_{i_{\tau(1)}})\cdots \delta^{j_{\tau(N)}}(a_{i_{\tau(N)}})=0$ by minimality of $j_1\cdots j_N$.  On the other hand, if 
$j_{\tau(1)}\cdots j_{\tau(N)}$ is not ${\bf b}$-bounded then $a_{i_0} \delta^{j_{\tau(1)}}(a_{i_{\tau(1)}})\cdots \delta^{j_{\tau(N)}}(a_{i_{\tau(N)}})$ contains a factor that lies in $T_n^{b_n}$ for some $n\ge 0$ and hence it is zero.  Thus we have shown that in either case we have $a_{i_0} \delta^{j_{\tau(1)}}(a_{i_{\tau(1)}})\cdots \delta^{j_{\tau(N)}}(a_{i_{\tau(N)}})$ is zero for all applicable $\tau$, and so
from Equation (\ref{eq: sigma}) we see $$a_{i_0}\delta^{j_1}(a_{i_1})\cdots \delta^{j_{N}}(a_{i_{N}})=0,$$
a contradiction.  It follows that $S^{N+1}=0$.
\end{proof}
We now deduce Theorem \ref{cor: J} from Theorem \ref{NilRing}.  
\begin{proof}[Proof of Theorem \ref{cor: J}]
Since $R$ is PI, the sum of all nil right ideals is a nil two-sided locally nilpotent ideal $N$, which is called the nil radical of $R$ (see Rowen \cite[Proposition 1.6.9 and Corollary 1.6.18]{Ro} and Kaplansky \cite{K}).  We claim that $\delta(N)\subseteq N$.  To see this, suppose to the contrary that $\delta(N)\not\subseteq N$.  Then there is some $a\in N$ such that $\delta(a)\not\in N$.  Then either $\delta(a)$ is not nilpotent or there is some $r\in R$ such that $\delta(a)r$ is not nilpotent.  In the latter case, we have $\delta(ar)=\delta(a)r+a\delta(r)\equiv \delta(a)r~(\bmod ~N)$ and so if $\delta(a)r$ is not nilpotent then neither is $\delta(ar)$.  Hence in either case we see we can find an element $b\in N$ such that $\delta(b)$ is not nilpotent.  By assumption, there is some $n\ge 2$ such that $b^n=0$.  It is straightforward to show that there exist nonnegative integers $c_{j_1,\ldots ,j_n}$ such that
\begin{equation}
\label{eq: b}
0=\delta^n(b^n) = \sum_{j_1+\cdots +j_n=n} c_{j_1,\ldots ,j_n} \delta^{j_1}(b)\cdots \delta^{j_n}(b).
\end{equation}
Moreover, we have that $c_{1,1,\ldots ,1}\ge 1$ and hence is nonzero in $R$ since $R$ is a unital algebra over a field of characteristic zero.  
Observe that if $j_1+\cdots +j_n=n$ and $(j_1,\ldots ,j_n)\neq (1,1,\ldots ,1)$ then $j_i=0$ for some $i$ and so $\delta^{j_1}(b)\cdots \delta^{j_n}(b)\in RbR\subseteq N$.  Hence
Equation (\ref{eq: b}) gives
$\delta(b)^n \in N$.  Since $N$ is a nil ideal, it follows that $\delta(b)$ is nilpotent, a contradiction.   Thus $\delta(N)\subseteq N$.  Notice that $N$ is locally nilpotent and so the subring $N[x;\delta]$ of $R[x;\delta]$ is a locally nilpotent ideal of $R[x;\delta]$ by Theorem \ref{NilRing}.   It follows that $N[x;\delta]\subseteq J(R[x;\delta])$.  

To show that $J(R[x;\delta])$ is equal to $N[x;\delta]$, it suffices to show that $(R/N)[x;\delta]$ has zero Jacobson radical.  A result of Tsai, Wu, and Chuang \cite{TWC} gives that if $S$ is a PI ring with zero nil radical then the Jacobson radical of $S[x;\delta]$ is zero.  The result now follows.
\end{proof}
We note that the characteristic zero hypothesis is essential in Theorem \ref{cor: J}.  For example, if $p>0$ is a prime number and we let $R=\mathbb{F}_p[T]/(T^p)$ and let $t$ denote the image of $T$ in $R$, then $R$ has a unique derivation satisfying $\delta(t)=1$.  It is clear that $R$ is commutative (and hence PI) and that the nil radical of $R$ is not closed under application of $\delta$ (since $t$ is in the nil radical and $\delta(t)=1$).  In fact, the only proper ideal of $R$ closed under application of $\delta$ is easily seen to be $(0)$.  In this case, the result of Ferrero, Kishimoto, and Motose \cite{FKM} gives that $S:=J(R[x;\delta])\cap R$ is a nil ideal of $R$ that is closed under $\delta$ and thus we see that $S$ is necessarily zero.  They also show that $J(R[x;\delta])=S[x;\delta]$ and so the Jacobson radical is zero in this case.

\bibliographystyle{plain}
\end{document}